 \numberwithin{equation}{section}
  \def\sw#1{{\sb{(#1)}}}
  \def\<{{\langle}}
  \def\>{{\rangle}}
  \def\eps{\varepsilon}
  \def\note#1{{}}
  \def\note#1{}
  \def\M{{\mathfrak M}}
  \def\I{\mathfrak{I}}
   \def\cX{{\mathfrak X}}
  \def\bX{{\mathbb X}}
  \def\oX{\mathfrak{\bar{X}}}
  \def\lrhom#1#2#3#4{{{\rm Hom}\sb{#1, #2}(#3,#4)}}
  \def\lhom#1#2#3{{{\rm Hom}\sb{#1-}(#2,#3)}}
  \def\rhom#1#2#3{{{\rm Hom}\sb{#1}(#2,#3)}}
  \def\rend#1#2{{{\rm End}\sb{#1}(#2)}}
  \def\Rend#1#2{{{\rm End}\sp{#1}(#2)}}
  \def\Rhom#1#2#3{{{\rm Hom}\sp{#1}(#2,#3)}}
  \def\beq{\begin{equation}}
  \def\eeq{\end{equation}}
  \def\DC{{\Delta_\cC}}
  \def \eC{{\eps_\cC}}
  \def\ot{{\otimes}}
  \def\roM{\varrho^{M}}
  \newcommand{\Ra}{\Rightarrow}
  \def\coker{\mathrm{coker}}
  \newcommand{\cC}{\mathcal{C}}
  \def\ot{{\otimes}}
  \def\cp{\raise1pt\hbox{${\scriptstyle{\#}}$}}
\def\stac#1{\raise-.2cm\hbox{$\stackrel{\displaystyle\otimes}{\scriptscriptstyle{#1}}$}}
\def\cten#1{\raise-.2cm\hbox{$\stackrel{\displaystyle\widehat{\otimes}}
{\scriptscriptstyle{#1}}$}}
  \newcounter{zlist}
  \newenvironment{zlist}{\begin{list}{(\arabic{zlist})}{
  \usecounter{zlist}\leftmargin2.5em\labelwidth2em\labelsep0.5em
  \topsep0.6ex%\itemsep0.3ex plus0.2ex minus0.3ex
  \parsep0.3ex plus0.2ex minus0.1ex}}{\end{list}}
  \newcounter{blist}
  \newenvironment{blist}{\begin{list}{(\alph{blist})}{
  \usecounter{blist}\leftmargin2.5em\labelwidth2em\labelsep0.5em
  \topsep0.6ex %\itemsep0.3ex plus0.2ex minus0.3ex
  \parsep0.3ex plus0.2ex minus0.1ex}}{\end{list}}
\def\stac#1{\raise-.2cm\hbox{$\stackrel{\displaystyle\otimes}{\scriptscriptstyle{#1}}$}}
  \newtheorem{proposition}{Proposition}[section]
  \newtheorem{corollary}[proposition]{Corollary}
  \newtheorem{theorem}[proposition]{Theorem}
  \theoremstyle{definition}
  \newtheorem{definition}[proposition]{Definition}
  \newtheorem{example}[proposition]{Example}
  \theoremstyle{remark}
  \newtheorem{remark}[proposition]{Remark}
  \newcounter{c}
  \renewcommand{\[}{\setcounter{c}{1}$$}
  \newcommand{\etyk}[1]{\vspace{-7.4mm}$$\begin{equation}\Label{#1}
  \addtocounter{c}{1}}
  \renewcommand{\]}{\ifnum \value{c}=1 $$\else \end{equation}\fi}
\begin{document}

 \title{Notes on formal smoothness}
   \author{Tomasz Brzezi\'nski}
  \address{ Department of Mathematics, Swansea University,
  Singleton Park, \newline\indent  Swansea SA2 8PP, U.K.}
 \email{T.Brzezinski@swansea.ac.uk}
  \subjclass{16W30; 18A40; 16D90}
 \dedicatory{Dedicated to Robert Wisbauer on the occasion of his 65th birthday}

  \begin{abstract}
  The definition of an S-category is proposed by weakening the axioms of a Q-category introduced by Kontsevich and Rosenberg. Examples of Q- and S-categories and (co)smooth objects in such categories are given.
    \end{abstract}
  \maketitle

\section{Introduction}
In \cite{KonRos:non} Kontsevich and Rosenberg introduced the notion of a Q-category as a framework for developing non-commutative algebraic geometry. Relative to such a Q-category they introduced and studied the notion of a {\em formally smooth object}. Depending on the choice of Q-category this notion captures e.g.\ that of a {\em smooth algebra} of \cite{Sch:smo}, which arose  a considerable interest since its role in non-commutative geometry  was revealed in   
\cite{CunQui:alg}. 

The aim of these notes is to give a number of examples of Q-categories, and their weaker version which we term S-categories, of interest in module, coring and comodule theories, and to give examples of smooth objects in these Q-categories.  
Crucial to the definition of an S-category is the notion of a {\em separable functor} introduced in \cite{NasBer:sep}. In these notes we consider only the separability of functors with adjoints. This case is fully described by the Rafael Theorem \cite{Raf:sep}: A functor which has a right (resp.\ left) adjoint is separable if and only if the unit (resp.\ counit) of adjunction is a natural section (resp.\ retraction). For a detailed discussion of separable functors we refer to \cite{CaeMil:gen}.

Throughout these notes, by a category we mean a set-category (i.e.\ in which morphisms form sets), by functors we mean covariant functors. All rings are unital and associative. For an $A$-coring $\cC$, $\DC$ denotes the coproduct and $\eC$ denotes the counit. Whenever needed, we use the standard Sweedler notation for a coproduct $\DC(c) = \sum c\sw 1\ot_A c\sw 2$ and for a coaction $\roM(m) = \sum m\sw 0\ot_A m\sw 1$.

\section{Smoothness and cosmoothness in Q- and S-categories}\label{s-cat}
Here we gather definitions of categories and objects we study in these notes.
\begin{definition}\label{def.s-cat}
An {\em S-category} is a pair of functors ${\bX} = ( \xymatrix{{\oX} \ar@<2pt>[r]^{u_*} & \cX\ar@<2pt>[l]^{u^*}})$ such that $u^*$ is separable and left adjoint of $u_*$.
\end{definition}

This means that in an S-category $\bX = ( \xymatrix{\oX \ar@<2pt>[r]^{u_*} & \cX\ar@<2pt>[l]^{u^*}})$ the unit of adjunction $\eta: \cX\to u_*u^*$ has a natural retraction $\nu : u_*u^*\to \cX$. Therefore, for all objects $x$ of $\cX$ and $y$ of $\oX$, there exist morphisms
$$
\oX( y, u^*(x)) \to \cX(u_*(y), x), \qquad g\mapsto \nu_x\circ u_*(g).
$$
The notion of an S-category is a straightforward generalisation of that of a Q-category, introduced in \cite{KonRos:non}. The latter is defined as a pair of functors $\bX = ( \xymatrix{\oX \ar@<2pt>[r]^{u_*} & \cX\ar@<2pt>[l]^{u^*}})$ such that $u^*$ is full and faithful and left adjoint of $u_*$. In a Q-category the unit of adjunction $\eta$ is a natural isomorphism, hence, in particular, a section. Thus any Q-category is also an S-category. Following the Kontsevich-Rosenberg terminology (prompted by algebraic geometry) the functors $u_*$ and $u^*$ constituting an S-category are termed the {\em direct image} and {\em inverse image} functors, respectively.

\begin{definition}\label{def.s-cat.comp}
We say that an S-category $\bX = ( \xymatrix{\oX \ar@<2pt>[r]^{u_*} & \cX\ar@<2pt>[l]^{u^*}})$ is {\em supplemented} if there exists a functor $u_!: \oX\to \cX$ and a natural transformation $\bar{\eta}: \oX \to u^*u_!$.  
\end{definition}

In particular, an S-category $\bX = ( \xymatrix{\oX \ar@<2pt>[r]^{u_*} & \cX\ar@<2pt>[l]^{u^*}})$ is supplemented  if $u^*$ has a left adjoint. Furthermore, $\bX$ is supplemented  if the functor $u_*$ is separable, since, in this case, the counit of adjunction has a section which we can take for $\bar{\eta}$ (and $u_! = u_*$). This supplemented S-category is termed a {\em self-dual supplemented S-category}.

In a supplemented S-category, for any $y\in \oX$, there is a canonical morphism in $\cX$, natural in $y$,
$$
r_y : u_*(y) \to u_!(y),
$$
defined as a composition
$$
r_y : \xymatrix{u_*(y) \ar[rr]^ {u_*(\bar{\eta}_y)} && u_*u^*u_!(y) \ar[rr]^{\nu_{u_!(y)}} && u_!(y)\ .}
$$
 The existence of canonical morphisms $r_y$ allows us to make the following
\begin{definition} \label{def.smooth}
Given a supplemented S-category $\bX = ( \xymatrix{\oX \ar@<2pt>[r]^{u_*} & \cX\ar@<2pt>[l]^{u^*}})$, with the natural map $r: u_*\to u_!$,
 an object $x$ of $\cX$ is said to be:
\begin{blist}
\item {\em formally $\bX$-smooth} if, for any $y\in \oX$, the mapping
$\cX(x, r_y)$ is surjective;
\item {\em formally $\bX$-cosmooth} if, for any $y\in \oX$, the mapping
$\cX(r_y,x)$ is surjective.
\end{blist}
\end{definition}

\begin{remark}
We would like to stress that the notion of formal $\bX$-(co)smoothness is relative to the choice of the retraction of the unit of adjunction, and the choice of $u_!$ and $\bar{\eta}$, since the definition of $r$ depends on all these data. 
\end{remark}
Dually to S- and Q-categories one defines S$^\circ$-categories and Q$^\circ$-categories.

\begin{definition}\label{def.so}
An {\em S$^\circ$-category} (respectively {\em Q$^\circ$-category}) is a pair of functors $\bX = ( \xymatrix{\oX \ar@<2pt>[r]^{u_*} & \cX\ar@<2pt>[l]^{u^*}})$ such that $u^*$ is separable (resp.\ fully faithful) and right adjoint of $u_*$.
\end{definition}

Thus an adjoint pair of separable functors gives rise to a supplemented S- and S$^\circ$-category. In these notes (with a minor exception) we concentrate on S-categories.

\section{Examples of Q- and S-categories}
The following generic example of a Q-category was constructed by Kontsevich and Rosenberg in \cite{KonRos:non}.
\begin{example}[The Q-category of morphisms] \label{ex.mor} Let $\cX$ be any category, and let $\cX^2$ be the category of morphisms in $\cX$ defined as follows. The objects of $\cX^2$ are morphisms  $f$, $g$ in $\cX$. Morphisms in $\cX^2$ are commutative
squares
\[
{\xymatrix{ x\ar[r]^f \ar[d] & y\ar[d] \\x' \ar[r]^g & y'}}
\]
where the vertical arrows are  in $\cX$. Now, set $\oX = \cX^2$. The inverse image
functor $u^{\ast}$ is
\[
u^{\ast}:x\mapsto \left(\xymatrix{x \ar[r]^x& x}\right) ,\qquad \left(
{\xymatrix{x\ar[r]^f & y}} \right)  \mapsto\left(\vcenter{{\xymatrix{ x\ar[r]^x \ar[d]_{f} & x\ar[d]^{f} \\
y \ar[r]^y & y}}}
\right)  .
\]
The direct image functor $u_{\ast}$ is defined by
\[
u_{\ast}:\left(
{\xymatrix{x\ar[r]^f & y}} 
\right)  \mapsto x,\qquad
\vcenter{{\xymatrix{ x\ar[r]^f \ar[d] & y\ar[d] \\x' \ar[r]^g & y'}}} \mapsto\left(\vcenter{{\xymatrix{x\ar[d]\\ x'}}}  \right)  .
\]
Note that, for all objects $x$ and morphisms $f$ in $\cX$, 
$$
u_*u^*(x) = u_*(\xymatrix{x \ar[r]^x &x}) = x, \qquad u_*u^*(f) = f.
$$
Hence, for all objects $x$ in $\cX$, there is an isomorphism (natural in $x$), $\eta_x: x\to u_*u^*(x)$, $\eta_x =x$.
 
Note further that for all objects $\xymatrix{x \ar[r]^f &y}$ in $\cX^2$, $u^*u_*(f) = x$, and we can define a morphism $\eps_f : u^*u_*(f) \to f$ by
$$
\eps_f = \left(\vcenter{\xymatrix{ x\ar[r]^x \ar[d]_{x} & x\ar[d]^{f} \\
x \ar[r]^f & y}}\right).
$$
In this way, $u_*$ is the right adjoint of $u^*$ with counit $\eps$ and unit $\eta$. The unit is obviously a natural isomorphism, hence $u^*$ is full and faithful and, thus, a Q-category $\bX = ( \xymatrix{\oX \ar@<2pt>[r]^{u_*} & \cX\ar@<2pt>[l]^{u^*}})$ is constructed. $\bX$ is supplemented, since $u^*$ has a left adjoint 
$$
u_! : \left( \xymatrix{x\ar[r]^f & y}\right) \mapsto y, \qquad \left(\vcenter{\xymatrix{ x\ar[r]^f \ar[d] & y\ar[d] \\x' \ar[r]^g & y'}}\right) \mapsto \left(\vcenter{\xymatrix{  y\ar[d] \\ y'}}\right). 
$$
The unit of the adjunction $u_!\dashv u^*$  is, for all $f:x\to y$,
$$
\bar{\eta}_f = \left(\vcenter{\xymatrix{ x\ar[r]^f \ar[d]_{f} & y\ar[d]^{y} \\
y \ar[r]^y & y}}\right),
$$
and thus the corresponding maps $r$ come out as
$$
r_f =f.
$$
Consequently, an object $x\in \cX$ is formally $\bX$-smooth (when $\bX$ is supplemented by $u_!$ and $\bar{\eta}$) provided, for all $\xymatrix{y \ar[r]^f &z} \in \oX$, the mapping
$$
\cX(x,y)\to \cX(x,z), \qquad g\mapsto  f\circ g,
$$
is surjective. Similarly, $x$ is formally $\bX$-cosmooth if and only if the mappings
$$
\cX(z,x)\to \cX(y,x), \qquad g\mapsto g\circ f,
$$ 
are surjective.

This generic example has a useful modification whereby one takes for $\oX$ any full subcategory of $\cX^2$ which contains all the identity morphisms in $\cX$.
\end{example}
\begin{example}[The Wisbauer Q-category] \label{ex.sigma} Let $R$ by a ring and $M$ be a left $R$-module. Following \cite[Section~15]{Wis:fou} $\sigma[M]$ denotes a full subcategory of  the category ${}_R\M$ of left $R$-modules, consisting of objects subgenerated by $M$. Since $\sigma[M]$ is a full subcategory of ${}_R\M$, the inclusion functor
$$
u^*: \sigma[M] \to {}_R \M,
$$
is full and faithful. It also has the right adjoint, the trace functor (see \cite[45.11]{Wis:fou} or \cite[41.1]{BrzWis:cor}),
$$
u_*= \mathcal{T}^M: {}_R \M \to \sigma[M], \qquad  \mathcal{T}^M(L) = \sum \{f(N)\; |\; N\in\sigma[M], \; f\in \rhom R ML\}.
$$
Hence there is a Q-category $\bX = ( \xymatrix{\oX \ar@<2pt>[r]^{u_*} & \cX\ar@<2pt>[l]^{u^*}})$ with $\cX=\sigma[M]$ and $\oX = {}_R\M$.
\end{example}
All the remaining examples come from the theory of corings.

\begin{example}[Comodules of a locally projective coring]\label{ex.loc.proj} This is a special case of Example~\ref{ex.sigma}. Let $(\cC,\DC,\eC)$ be an $A$-coring which is locally projective as a left $A$-module. Let $R ={}^*\cC = \lhom A\cC A$ be a left dual ring of $\cC$ with the unit $\eC$ and product, for all $r,s\in R$,
$$
rs : \xymatrix{\cC \ar[r]^\DC & \cC\ot_A\cC \ar[rr]^{\cC\ot_A s} && \cC \ar[r]^r & A.}
$$
Take $\cX = \M^\cC$, the category of right $\cC$-comodules, and $\oX = {}_R\M$. Define a functor
$$
u^* : \M^\cC \to {}_R\M, \qquad M\mapsto M,
$$
where right $\cC$-comodule $M$ is given a left $R$-module structure by $rm = \sum m\sw 0r(m\sw 1)$. Since $\cC$ is a locally projective left $A$-module, the functor $u^*$ has a right adjoint, the rational functor (see \cite[20.1]{BrzWis:cor}),
$$u_* = \mathrm{Rat}^\cC: {}_R\M\to \M^\cC , \qquad \mathrm{Rat}^\cC(M) = \{ n\in M \; |\; \mbox{ $n$ is rational}\},
$$
where an element $n\in M$ is said to be rational provided there exists $\sum_i m_i\ot_A c_i \in M\ot_{A}\cC$ such that, for all $r\in R$, $rm = \sum_i m_i r(c_i)$. Here, the left $R$-module $M$ is seen as a right $A$-module via the anti-algebra map $A\to R$, $a\mapsto \eC(-a)$.
\end{example}

\begin{example}[Coseparable corings] \label{ex.cosep} Recall that an $A$-coring $(\cC,\DC,\eC)$ is said to be {\em coseparable} \cite{Guz:coi} if there exists a $(\cC,\cC)$-bicomodule retraction of the coproduct $\DC$. This is equivalent to the existence of a {\em cointegral} defined as an $(A,A)$-bimodule map $\delta: \cC\ot_A\cC\to A$ such that $\delta\circ\DC = \eC$, and 
$$
(\cC\ot_A\delta)\circ(\DC\ot_A\cC) = (\delta\ot_A \cC)\circ (\cC\ot_A\DC).
$$
Furthermore, this is equivalent to the separability of the forgetful functor $(-)_A:\M^\cC\to \M_A$ \cite[Theorem~3.5]{Brz:str}). Since this forgetful functor is a left adjoint to $-\ot_A\cC: \M_A\to \M^\cC$, a coseparable coring $\cC$ gives rise to an S-category $\bX$ with 
$$
\cX = \M^\cC, \qquad \oX = \M_A, \qquad u^* = (-)_A, \qquad u_* = -\ot_A\cC.
$$
This S-category is denoted by $\bX^\cC_\delta$. By \cite[Theorem~3.5]{Brz:str}, the retraction $\nu$ of the unit of the adjunction is given explicitly, for all $M\in \M^\cC$,
$$
\nu_M : M\ot_A\cC\to M, \qquad m\ot_A c\mapsto \sum m\sw 0\delta(m\sw 1\ot_A c).
$$
In general, $\bX^\cC_\delta$ need not to be supplemented. However, if there exists
$$
e\in \cC^A : = \{c\in \cC\: |\; \forall a\in A, \; ac=ca\},
$$
then $\bX^\cC_\delta$ can be supplemented with
$$
u_! = -\ot_A\cC, \qquad \bar{\eta}_M : M\to M\ot_A\cC, \qquad m\mapsto m\ot_A e.
$$
This supplemented S-category is denoted by $\bX^\cC_{\delta,e}$.

Recall that an $A$-coring $\cC$ is said to be {\em cosplit} if there exists an $A$-central element $e\in \cC^A$ such that $\eC(e) =1$. By \cite[Theorem~3.3]{Brz:str} this is equivalent to the separability of the functor $-\ot_A\cC$, and thus a cosplit coring gives rise to an S$^\circ$-category. Therefore, a coring which is both cosplit and coseparable  induces a self-dual, supplemented $S$-category.
\end{example}

In addition to the defining adjunction of an $A$-coring, $(-)_A\dashv -\otimes_A\cC$, for any right $\cC$-comodule $P$, there is a pair of adjoint functors
$$
-\ot_B P : \M_B\to \M^\cC, \qquad \Rhom\cC P-:\M^\cC\to \M_B,
$$
where $B$ is any subring of the endomorphism ring $S=\Rend\cC P$ (cf.\ \cite[18.21]{BrzWis:cor}. Depending on the choice of $\cC$, $P$ and $B$ this adjunction provides a number of examples of Q-categories.
\begin{example}[Comatrix corings]\label{ex.comat} Take a $(B,A)$-bimodule $P$ that is finitely generated and projective as a right $A$-module. Let $\mathbf{e} \in P\otimes_A P^*$ be the dual basis (where $P^* = \rhom APA$), and let $\cC = P^*\otimes_B P$ be the comatrix coring associated to $P$ \cite{ElKGom:com}. The coproduct and counit in $\cC$ are given by 
$$
\DC (\xi\ot_Bp) = \xi\ot_B \mathbf{e}\ot_B p, \qquad \eC(\xi\ot_B p) = \xi(p),
$$
for all $p\in P$ and $\xi\in P^*$. $P$ is a right $\cC$-comodule with the coaction $\varrho^P: p\mapsto \mathbf{e}\ot_Bp$. Let
$$
\cX = \M_B, \qquad \oX= \M^\cC, \qquad u^* = -\ot_BP, \qquad u_*= \Rhom \cC P -.
$$
In view of \cite[Proposition~2.3]{CaeDeG:com}, $\bX = ( \xymatrix{\oX \ar@<2pt>[r]^{u_*} & \cX\ar@<2pt>[l]^{u^*}})$ is a Q-category if and only if the map
$$
B\to P\ot_A P^*, \qquad b\mapsto b\mathbf{e},
$$
is pure as a morphism of left $B$-modules (equivalently, $P$ is a totally faithful left $B$-module).
\end{example}
\begin{example}[Strongly $(\cC,A)$-injective comodules] \label{ex.str.inj}
Let $\cC$ be an $A$-coring, let $P$ be a right $\cC$-comodule and $S=\Rend\cC P$.
Following \cite[2.9]{Wis:gal}, $P$ is said to be {\em strongly $(\cC,A)$-injective} if the coaction $\varrho^P: P\to P\ot_A\cC$ has a left $S$-module right $\cC$-comodule retraction. For such a comodule, define
$$
\cX = \M_S, \qquad \oX= \M^\cC, \qquad u^* = -\ot_SP, \qquad u_*= \Rhom \cC P -.
$$
In view of \cite[3.2]{Wis:gal}, if $P$ is a finitely generated and projective as a right $A$-module, then $\bX = ( \xymatrix{\oX \ar@<2pt>[r]^{u_*} & \cX\ar@<2pt>[l]^{u^*}})$ is a Q-category.
\end{example}
\begin{example}[$(\cC,A)$-injective Galois comodules]\label{ex.gal} Recall that a right $\cC$-comodule is said to be {\em $(\cC,A)$-injective}, provided there is a right $\cC$-colinear retraction of the coaction. The full subcategory of $\M^\cC$ consisting of all $(\cC,A)$-injective comodules is denoted by $\I^\cC$.

Let $P$ be a right comodule of an $A$-coring $\cC$, and let $S = \Rend\cC P$ and $T=\rend AP$. Following \cite[4.1]{Wis:gal}, $P$ is said to be a {\em Galois comodule} if, for all $N\in \I^\cC$, the evaluation map
$$
\Rhom\cC P N \ot_S P\to N, \qquad f\ot_Sp\to f(p),
$$
is an isomorphism of right $\cC$-comodules.

Let $P$ be a Galois comodule, and assume that the inclusion $S\to T$ has a right $S$-module retraction. By \cite[4.3]{Wis:gal} this is equivalent to say that $P$ is a $(\cC,A)$-injective comodule, and hence one can consider the following pair of categories and adjoint functors:
$$
\oX = \M_S, \qquad \cX=\I^\cC, \qquad u_* = -\otimes_SP: \oX\to \cX, \qquad u^* =\Rhom \cC P-: \cX\to\oX.
$$
Since the evalutaion map is the counit of the adjunction $u_* \dashv u^*$, the Galois property of $P$ means that the functor $u^*$ is fully faithful. Thus $\bX = ( \xymatrix{\oX \ar@<2pt>[r]^{u_*} & \cX\ar@<2pt>[l]^{u^*}})$ is a Q$^\circ$-category.
\end{example}
\section{Examples of smooth and cosmooth objects}

Let $\cC$ be an $A$-coring, set $\cX=\M^\cC$, and consider the full subcategory of $\cX^2$ consisting of all monomorphisms in $\M^\cC$ with an $A$-module retraction. With these data one constructs a Q-category as in Example~\ref{ex.mor}. This Q-category is denoted by $\bX^\cC$.

\begin{theorem}\label{thm.inj}
A right $\cC$-comodule $M$ is $(\cC,A)$-injective if and only if $M$ is a formally $\bX^\cC$-cosmooth object.
\end{theorem}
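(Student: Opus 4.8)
The plan is to unwind Definition~\ref{def.smooth}(b) for the specific Q-category $\bX^\cC$ and compare the resulting condition with the definition of $(\cC,A)$-injectivity. By construction, $\bX^\cC$ is the morphism Q-category of Example~\ref{ex.mor} with $\cX=\M^\cC$ and $\oX$ the full subcategory of $(\M^\cC)^2$ whose objects are monomorphisms in $\M^\cC$ admitting an $A$-module retraction. From Example~\ref{ex.mor} we already know that for this kind of Q-category the canonical maps are $r_f = f$, and that an object $M\in\cX$ is formally $\bX$-cosmooth precisely when, for every object $\xymatrix{y \ar[r]^f & z}$ of $\oX$, the mapping $\cX(z,M)\to\cX(y,M)$, $g\mapsto g\circ f$, is surjective. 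So the first step is simply to record that $M$ is $\bX^\cC$-cosmooth if and only if, for every monomorphism $f\colon y\hookrightarrow z$ in $\M^\cC$ that splits as a map of right $A$-modules, every $\cC$-colinear map $y\to M$ extends along $f$ to a $\cC$-colinear map $z\to M$.

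**The two implications.**
I would then prove the equivalence in two directions. For the harder ``cosmooth $\Rightarrow$ injective'' direction, recall that $(\cC,A)$-injectivity of $M$ means the coaction $\varrho^M\colon M\to M\ot_A\cC$ has a $\cC$-colinear retraction. The natural test object to feed into the cosmoothness condition is the coaction itself: $\varrho^M$ is a monomorphism in $\M^\cC$ (split in $\M_A$ by $M\ot_A\eps_\cC$, since $(M\ot_A\eps_\cC)\circ\varrho^M=\id_M$), and $M\ot_A\cC$ carries the cofree comodule structure via $M\ot_A\DC$. Applying cosmoothness to this particular $f=\varrho^M$ and to the identity map $\id_M\in\cX(M,M)=\cX(y,M)$ produces a $\cC$-colinear map $M\ot_A\cC\to M$ whose precomposition with $\varrho^M$ is $\id_M$; that is exactly the required colinear retraction, so $M$ is $(\cC,A)$-injective. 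For the converse, suppose $M$ is $(\cC,A)$-injective with colinear retraction $\lambda\colon M\ot_A\cC\to M$, $\lambda\circ\varrho^M=\id_M$. Given any split monomorphism $f\colon y\hookrightarrow z$ in $\oX$ with $A$-linear retraction $\pi\colon z\to y$, and any $\cC$-colinear $g\colon y\to M$, I would build the extension $\tilde g\colon z\to M$ by the standard averaging formula $\tilde g=\lambda\circ(g\ot_A\cC)\circ\varrho^z$ twisted through $\pi$, i.e. $\tilde g = \lambda\circ\bigl((g\circ\pi)\ot_A\cC\bigr)\circ\varrho^z$, and then check that $\tilde g$ is $\cC$-colinear and that $\tilde g\circ f=g$.

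**Where the work concentrates.**
The main obstacle is the verification in the converse direction, namely that the averaged map $\tilde g$ is genuinely $\cC$-colinear and genuinely restricts to $g$ along $f$. Colinearity of $\tilde g$ rests on colinearity of $\lambda$ together with coassociativity, while the identity $\tilde g\circ f=g$ requires combining $\pi\circ f=\id_y$, the colinearity of $g$ (to move $\varrho^z$ past $f$ to $\varrho^y$), and the retraction property $\lambda\circ\varrho^M=\id_M$; these are short but must be threaded together carefully in Sweedler notation. I expect the forward direction to be essentially immediate once the right test object is identified, so the content of the proof is really the two observations that $\varrho^M$ is the universal split mono to probe with, and that $(\cC,A)$-injectivity supplies exactly the averaging data needed to extend arbitrary colinear maps along split monos.
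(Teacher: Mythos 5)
Your proposal is correct, and its first step---identifying formal $\bX^\cC$-cosmoothness, via the discussion closing Example~\ref{ex.mor}, with the property that every $\cC$-colinear map into $M$ extends along any monomorphism in $\M^\cC$ admitting a right $A$-module retraction---is precisely the paper's proof. The difference lies in what happens next: the paper stops at this translation and cites \cite[18.18]{BrzWis:cor} for the equivalence of the extension property with $(\cC,A)$-injectivity, whereas you prove that equivalence from scratch. Both of your directions check out and are the standard relative-injectivity argument: probing cosmoothness with $f=\varrho^M$ (a monomorphism in $\M^\cC$ into the cofree comodule $M\ot_A\cC$, split in $\M_A$ by $M\ot_A\eC$) and with the identity of $M$ produces exactly the colinear retraction of the coaction; conversely, given a colinear retraction $\lambda$ of $\varrho^M$ and an $A$-linear retraction $\pi$ of $f\colon y\to z$, your map $\tilde{g}=\lambda\circ\bigl((g\circ\pi)\ot_A\cC\bigr)\circ\varrho^z$ is colinear (a composite of colinear maps, since any right $A$-linear map induces a colinear map between cofree comodules) and satisfies $\tilde{g}\circ f=g$ using colinearity of $f$ and $g$, the identity $\pi\circ f$, and the retraction property of $\lambda$. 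So your route is the paper's reduction plus an inlined, self-contained proof of the cited characterisation: the paper gains brevity by outsourcing that step to the literature, while your version makes the theorem independent of \cite{BrzWis:cor} and exposes where each hypothesis (the $A$-splitting of the mono, the colinearity of the retraction) actually enters.
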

\begin{proof} In view of the discussion at the end of Example~\ref{ex.mor}, an object $M\in \cX=\M^\cC$ if formally $\bX^\cC$-cosmooth if and only if, for all morphisms $f: N\to N'$ in $\M^\cC$ with right $A$-module retraction, the maps
$$
\vartheta_f: \Rhom \cC {N'} M \to \Rhom \cC N M, \quad g\mapsto g\circ f,
$$
are surjective. This means that, for all $h\in \Rhom \cC N M$, there is $g\in \Rhom \cC {N'} M$ completing the following diagram
$$
\xymatrix{ && M &\\
0 \ar[r] & N\ar[ru]^h\ar@<2pt>[rr]^f && N'\ar@<2pt>[ll]\ar@{.>}[lu]_g,}
$$
where the arrow $N'\to N$ is in $\M_A$, and thus is equivalent to $M$ being $(\cC,A)$-injective, see \cite[18.18]{BrzWis:cor}.
\end{proof}

The arguments used in the proof of Theorem~\ref{thm.inj}, in particular, the identification of (co)smooth objects as object with a (co)splitting property, apply to all Q-categories of the type described in Example~\ref{ex.mor}. This leads to reinterpretation of smooth algebras and coalgebras in abelian monoidal categories studied in \cite{ArdMen:Hoc}. 

\begin{example}\label{ex.smoalg}
Let $(V, \otimes)$ be an abelian monoidal category, i.e.\ a monoidal category which is abelian and such that the tensor functors $-\otimes v$, $v\otimes -$ are additive and right exact, for all objects $v$ of $V$. Let $\cX$ be the category of algebras in $V$, and let $\oX$ be a full subcategory of $\cX^2$, consisting of {\em Hochschild algebra extensions}, i.e.\ of all surjective algebra morphisms split as morphisms in $V$ and with a square-zero kernel. Denote the resulting Q-category by $\mathbb{HAE}$. In view of \cite[Theorem~3.8]{ArdMen:Hoc}, an algebra in $V$ is formally smooth in the sense of \cite[Definition~3.9]{ArdMen:Hoc}, i.e.\ it has the Hochschild dimension at most 1, if and only if it is a formally $\mathbb{HAE}$-smooth object.

In particular if $(V, \otimes)$ is the category of vector spaces (with the usual tensor product), we obtain the characterisation of smooth algebras \cite{Sch:smo} (or semi-free algebras in the sense of \cite{CunQui:alg}), described in \cite[Proposition~4.3]{KonRos:non}.
\end{example}
\begin{example}\label{ex.smocoalg}
Let $(V, \otimes)$ be an abelian monoidal category. Let $\cX$ be the category of coalgebras in $V$, and let $\oX$ be a full subcategory of $\cX^2$, consisting of {\em Hochschild coalgebra extensions}, i.e.\ of all injective coalgebra morphisms $\sigma: C\to E$ split as morphisms in $V$ and with the property $(p\ot p)\circ \Delta_E =0$, where $p: E\to \coker \sigma$ is the cokernel of $\sigma$. Denote the resulting Q-category by $\mathbb{HCE}$. In view of \cite[Theorem~4.16]{ArdMen:Hoc}, a coalgebra in $V$ is formally smooth in the  sense of \cite[Definition~4.17]{ArdMen:Hoc} if and only if it is a formally $\mathbb{HCE}$-cosmooth object.
\end{example}

The following example is taken from \cite{ArdBrz:for}.
\begin{example}\label{ex.smodule}
Let $A$ and $B$ be rings, and let $M$ be  a $(B,A)$-bimodule.  Denote by $\mathcal{E}_M$ the class of all $(B,B)$-bilinear maps $f$ such that $\rhom B M f$ splits as an $(A,B)$-bimodule map.  A $B$-bimodule $P$ is said to be {\em $\mathcal{E}_M$-projective}, provided every morphism $N\to P$ in $\mathcal{E}_M$ has a section. By the argument dual to that in the proof of Theorem~\ref{thm.inj} one can reinterpret $\mathcal{E}_M$-projectivity as formal smoothness as follows.

Take $\cX$ to be the category of $B$-bimodules and $\oX = \mathcal{E}_M$, a full subcategory of $\cX^2$. Denote the resulting Q-category by $\mathbb{E}$. A $B$-bimodule $P$ is formally $\mathbb{E}$-smooth if and only if, for all 
$f: N\to N' \in \mathcal{E}_M$, the function
$$
\Theta(f): \lrhom BB{P} N \to \lrhom BB{P}{N'}, \qquad g\mapsto f\circ g,
$$
is surjective. In terminology of \cite[Chapter X]{HilSta:cou}, $\mathbb{E}$-smoothness of $P$ is equivalent to 
the $\mathcal{E}_{M}$-projectivity of $P$. 

A $(B,A)$-bimodule $M$  is said to be {\em formally smooth} provided  the kernel of the evaluation map  
\[
\mathrm{ev}_{M}:M\otimes_{A}{\rhom BMB}\rightarrow B,\qquad\mathrm{ev} 
_{M}\left(  m\otimes_{A}f\right)  = f(m).
\]
is an
$\mathcal{E}_{M}$-projective $B$-bimodule. Thus $M$ is formally smooth if and only if $\ker \mathrm{ev}_{M}$ is formally $\mathbb{E}$-smooth.
\end{example}

Next we characterise all  smooth and cosmooth objects in the supplemented S-category $\bX^\cC_{\delta, e}$ associated to a coseparable $A$-coring $\cC$ with an $A$-central element $e$ as in Example~\ref{ex.cosep}.

\begin{proposition}\label{prop.cosep.smooth}
Let $\cC$ be a coseparable $A$-coring with a cointegral $\delta$ and an $A$-central element $e$, and let $\bX^\cC_{\delta, e}$ be the associated supplemented S-category. A right $\cC$-comodule $M$ is formally $\bX^\cC_{\delta, e}$-smooth if and only if the map
$$
\kappa_M :M\to M, \qquad m\mapsto \sum m\sw 0\delta(e\ot_A m\sw 1),
$$
is a right $A$-linear section (i.e.\ $\kappa_M$ has a left inverse in $\rend AM$).
\end{proposition}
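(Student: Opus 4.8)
The plan is to unwind the definition of formal $\bX^\cC_{\delta,e}$-smoothness in the supplemented S-category $\bX^\cC_{\delta,e}$ and to identify the canonical map $r$ explicitly, after which the surjectivity condition in Definition~\ref{def.smooth}(a) should collapse to a statement about the single morphism $\kappa_M$. First I would compute $r_y$ for the data at hand. Here $\cX=\M^\cC$, $\oX=\M_A$, the inverse image functor is the forgetful functor $u^*=(-)_A$, the direct image functor is $u_*=-\ot_A\cC$, and the supplementing data are $u_!=-\ot_A\cC$ with $\bar\eta_N:N\to N\ot_A\cC$, $n\mapsto n\ot_A e$. The retraction $\nu$ of the unit is the one recorded in Example~\ref{ex.cosep}, namely $\nu_M(m\ot_A c)=\sum m\sw 0\delta(m\sw 1\ot_A c)$. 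Plugging these into the composite $r_y:\nu_{u_!(y)}\circ u_*(\bar\eta_y)$ from Definition~\ref{def.smooth}, I expect, for an $A$-module $N$, to obtain a right $\cC$-comodule endomorphism $r_N:N\ot_A\cC\to N\ot_A\cC$ of the form $n\ot_A c\mapsto \sum n\ot_A c\sw 1\,\delta(c\sw 2\ot_A e)$, or a close variant thereof; the precise placement of $e$ and the Sweedler indices has to be read off carefully from the bicomodule structure and is where a small computation is unavoidable.

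With $r$ in hand, formal $\bX^\cC_{\delta,e}$-smoothness of $M$ means that $\cX(M,r_N):\M^\cC(M,N\ot_A\cC)\to\M^\cC(M,N\ot_A\cC)$ is surjective for every $N\in\M_A$. The second step is to trade comodule maps out of $M$ into a cofree comodule $N\ot_A\cC$ for plain $A$-module maps. This is exactly the adjunction isomorphism $\M^\cC(M,N\ot_A\cC)\cong \rhom A M N$ coming from $(-)_A\dashv -\ot_A\cC$: a $\cC$-colinear map $M\to N\ot_A\cC$ corresponds to its composite with $N\ot_A\eC$. Under this identification I would transport $\cX(M,r_N)$ to an endomorphism of $\rhom A M N$, and the expectation is that it becomes precisely precomposition-with or postcomposition-with $\kappa_M$, where $\kappa_M(m)=\sum m\sw 0\delta(e\ot_A m\sw 1)$ is the map in the statement. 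In other words, surjectivity of all the $\cX(M,r_N)$ should reduce to $\kappa_M$ admitting a right-inverse after applying $\rhom A M -$.

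The third step is to convert \emph{surjectivity of $\rhom A M {\kappa_M}$ for all $N$} into the concrete assertion that $\kappa_M$ itself has a left inverse in $\rend A M$. The natural move is to specialise to the representing object: taking $N=M$ (as a right $A$-module) and chasing the identity $1_M\in\rhom A M M$ through the surjectivity hypothesis produces an $A$-linear map $\lambda:M\to M$ with $\lambda\circ\kappa_M=1_M$, giving the forward implication. For the converse, if $\kappa_M$ has a left inverse $\lambda\in\rend A M$, then postcomposition (or precomposition) with $\lambda$ furnishes the required section for every $N$, so all the maps $\cX(M,r_N)$ are surjective. Here one uses naturality in $N$ to reduce the family of conditions to the universal one; this is the standard Yoneda-style argument.

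The step I expect to be the main obstacle is the first one: getting the exact formula for $r_N$ right, including the order of the tensor factors in $\delta(e\ot_A m\sw 1)$ versus $\delta(m\sw 1\ot_A e)$ and verifying that the resulting $r_N$ is genuinely $\cC$-colinear so that it lives in $\M^\cC$. The asymmetry between $\nu_M$ (which pairs $\delta(m\sw 1\ot_A c)$) and $\kappa_M$ (which pairs $\delta(e\ot_A m\sw 1)$) signals that the cointegral's bicomodule symmetry and the $A$-centrality of $e$ both get used; tracking the coaction through $\bar\eta$ and $\nu$ to confirm that the transported map is exactly $\rhom A M{\kappa_M}$, rather than some twisted version, is the delicate point. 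Once the transported map is correctly identified as being governed by $\kappa_M$, the remaining equivalence with "$\kappa_M$ has a left inverse in $\rend A M$" is a routine Yoneda argument and should cause no difficulty.
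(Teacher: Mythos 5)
Your overall strategy coincides with the paper's proof: compute $r_N$ explicitly, transport $\cX(M,r_N)$ through the adjunction isomorphism $\Rhom\cC M{N\ot_A\cC}\simeq\rhom AMN$ so that it becomes precomposition with $\kappa_M$, and then specialise to $N=M$ (pulling back the identity) for one direction and precompose with the retraction for the other. Steps two and three of your plan are exactly what the paper does, and your worry about checking that $r_N$ is $\cC$-colinear is unnecessary: by construction $r_N$ is a composite of morphisms in $\cX=\M^\cC$, namely $u_*(\bar\eta_N)=\bar\eta_N\ot_A\cC$ (the functor $u_*=-\ot_A\cC$ applied to an $A$-linear map) and the component $\nu_{N\ot_A\cC}$ of the natural retraction $\nu$.

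The genuine problem is in your step one, and it is not cured by the hedge ``or a close variant thereof'': the formula you propose for $r_N$ is wrong, and if you ran your own steps two and three with it you would prove a different proposition. Since $r_N=\nu_{u_!(N)}\circ u_*(\bar\eta_N)$ and $\nu$ is evaluated at the cofree comodule $N\ot_A\cC$, whose coaction is $N\ot_A\DC$, the Sweedler legs fed into $\delta$ come from the element $e$ inserted by $\bar\eta_N$, not from the free $\cC$-leg $c$:
$$
r_N(n\ot_A c)=\nu_{N\ot_A\cC}\bigl((n\ot_A e)\ot_A c\bigr)=\sum n\ot_A e\sw 1\,\delta(e\sw 2\ot_A c).
$$
Transporting this through the adjunction and using $\sum\eC(e\sw 1)e\sw 2=e$ together with left $A$-linearity of $\delta$ gives $\vartheta_{M,N}(f)(m)=\sum f(m\sw 0)\delta(e\ot_A m\sw 1)=f(\kappa_M(m))$, exactly the map in the statement. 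Your guessed $r_N(n\ot_A c)=\sum n\ot_A c\sw 1\delta(c\sw 2\ot_A e)$ instead collapses, after applying $N\ot_A\eC$ and the counit property of the coaction, to $f\mapsto f\circ\kappa'_M$ with $\kappa'_M(m)=\sum m\sw 0\delta(m\sw 1\ot_A e)$. A cointegral need not satisfy any symmetry exchanging its two arguments, so $\kappa'_M\neq\kappa_M$ in general, and your sketch would characterise formal $\bX^\cC_{\delta,e}$-smoothness by the wrong map. The fix is the one-line computation above; once $r_N$ is correct, the rest of your plan goes through verbatim as in the paper.
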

\begin{proof}
In this case, for all $N\in \M_A$, the canonical morphisms $r_N$ read
$$
r_N : N\ot_A\cC\to N\ot_A\cC, \qquad n\ot_A c\mapsto \sum n\ot_A e\sw 1\delta(e\sw 2\ot_A c).
$$
Using the (defining adjunction) isomorphisms $\Rhom\cC M{N\ot_A\cC}\simeq \rhom AMN$, the maps
$$
\Rhom \cC M {r_N} : \Rhom\cC M{N\ot_A\cC}\to \Rhom\cC M{N\ot_A\cC},
$$
can be identified with
$$
\vartheta_{M,N} : \rhom AMN\to \rhom AMN, \qquad f\mapsto (N\ot_A\eC)\circ r_N\circ (f\ot_A\cC)\circ \roM,
$$
where $\roM: M\to M\ot_A\cC$ is the coaction. Hence $\Rhom \cC M {r_N}$ are surjective for all $N$ if and only if $\vartheta_{M,N}$ are surjective for all $N$. These can be computed further, for all $m\in M$, $f\in \rhom AMN$,
\begin{eqnarray*}
\vartheta_{M,N}(f)(m) &=&  (N\ot_A\eC)\circ r_N(f(m\sw 0)\ot_A m\sw 1)\\
&=& (N\ot_A\eC)(f(m\sw 0)\ot_A e\sw 1\delta(e\sw 2\ot_Am\sw 1)) = 
\sum f(m\sw 0)\delta(e\ot_A m\sw 1) \\
&=& \sum f(m\sw 0\delta(e\ot_A m\sw 1)) = f(\kappa_M(m)),
\end{eqnarray*}
by the right $A$-linearity of $f$. Hence
$$
\vartheta_{M,N} (f)  = f\circ\kappa_M.
$$
If $\kappa_M$ has a retraction $\lambda_M\in \rend AM$, then for all $f\in \rhom AMN$,
$$
\vartheta_{M,N} (f\circ\lambda_M) = f\circ \lambda_M\circ\kappa_M = f,
$$
i.e., the $\vartheta_{M,N}$ are surjective. If, on the other hand, all the $\vartheta_{M,N}$ are surjective, choose $N=M$ and take any $\lambda_M\in \vartheta^{-1}_{M,M}(M)$. Then
$$
M = \vartheta_{M,M}(\lambda_M) = \lambda_M\circ\kappa_M,
$$
so $\lambda_M$ is a retraction of $\kappa_M$ as required.
\end{proof}

\begin{example}[Modules graded by $G$-sets]\label{gr.mod} 
Let $G$ be a group, $X$ be a (right) $G$-set and let $A = \oplus_{\sigma \in G}$ be a $G$-graded $k$-algebra. Following \cite{NasRai:mod}, a $kX$-graded right $A$-module $M = \oplus_{x\in X} M_x$ is said to be {\em graded by $G$-set $X$} provided, for all $x\in X$, $\sigma\in G$,
$$
M_xA_\sigma \subseteq M_{x\sigma}.
$$
A morphism of such modules is an $A$-linear map which preserves the $X$-grading. The resulting category is denoted by $\mbox{gr-}(G,A,X)$. It is shown in \cite[Section~4.6]{CaeMil:gen} that $\mbox{gr-}(G,A,X)$ is isomorphic to the category of right comodules of the following coring $\cC$. As a left $A$-module $\cC = A\ot kX$. The right $A$-multiplication is given by
$$
(a\ot x)a_\sigma = aa_\sigma \ot x\sigma, \qquad \forall a\in A, x\in X, a_\sigma\in A_\sigma.
$$
The coproduct and  counit are defined by
$$
\DC(a\ot x) = (a\ot x)\ot_A (1_A\ot x), \qquad \eC(a\ot x) = a.
$$
An object $M= \oplus_{x\in X} M_x$ in $\mbox{gr-}(G,A,X)$ is a right $\cC$-comodule with the coaction $\roM:M\to M\ot_A\cC$, $m_x\mapsto m_x\ot_A 1_A\ot x$, where $m_x\in M_x$. Also in \cite[Section~4.6]{CaeMil:gen} it is shown that $\cC$ is a coseparable coring with a cointegral (cf.\ \cite[Proposition~2.5.3]{Zar:adj})
$$
\delta: \cC\ot_A\cC\simeq A\ot kX\ot kX \to A, \qquad a\ot x\ot y\mapsto a \delta_{x,y}.
$$
Thus $\mbox{gr-}(G,A,X)$ gives rise to an S-category as in Example~\ref{ex.cosep}. 

Let $X^G := \{ x\in X\; |\; \forall \sigma\in G, \; x\sigma =x\}$ be the set of one-point orbits of $G$ in $X$. If $X^G\neq\emptyset$,  the above S-category can be supplemented as in Example~\ref{ex.cosep} by
$$
e := 1_A\ot z, \qquad z\in X^G.
$$
In this case, for any $M\in \mbox{gr-}(G,A,X)$, the map $\kappa_M$ in Proposition~\ref{prop.cosep.smooth} comes out as
$$
\kappa_M(m_x) = m_x\delta_{x,z}, \qquad \forall m_x\in M_x.
$$
Thus a graded module $M\in \mbox{gr-}(G,A,X)$ is formally $\bX^{A\ot kX}_{\delta, e}$-smooth if and only if it is concentrated in degree $z$, i.e., $M=M_z$.
\end{example}

Given an $A$-coring $\cC$, the set of right $A$-module maps $\cC\to A$, $\cC^*$, is a ring with the unit $\eC$ and the product, for all $\xi,\xi'\in \cC^*$, 
$$
\xi\xi' : \xymatrix{\cC \ar[r]^\DC & \cC\ot_A\cC \ar[rr]^{\xi'\ot_A \cC} && \cC \ar[r]^\xi & A.}
$$
\begin{proposition}\label{prop.cosep.cosmooth}
Let $\cC$ be a coseparable $A$-coring with a cointegral $\delta$ and an $A$-central element $e$, and let $\bX^\cC_{\delta, e}$ be the associated supplemented S-category. Then the following statements are equivalent:
\begin{zlist}
\item All right $\cC$-comodules are formally $\bX^\cC_{\delta, e}$-cosmooth.
\item The right $A$-linear map
$$
\lambda: \cC\to A, \qquad c\mapsto \delta(e\ot_Ac),
$$
has a left inverse in the dual ring $\cC^*$.
\item The regular right $\cC$-comodule $\cC$ is formally $\bX^\cC_{\delta, e}$-cosmooth.
\end{zlist}
\end{proposition}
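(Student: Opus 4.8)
The plan is to reduce all three conditions to a single statement about the comodule endomorphism $\phi$ of $\cC$ that underlies the canonical maps $r_N$, and then to exploit the fact that $\cC$ is a \emph{cofree} comodule.

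First I would record the shape of the maps involved. Since here $u_*=u_!=-\ot_A\cC$, a comodule $M$ is formally $\bX^\cC_{\delta,e}$-cosmooth exactly when, for every $N\in\M_A$, the precomposition map $g\mapsto g\circ r_N$ is surjective on $\Rhom\cC{N\ot_A\cC}M$. From the computation of $r_N$ in the proof of Proposition~\ref{prop.cosep.smooth} one has $r_N=\id_N\ot_A\phi$, where $\phi(c)=\sum e\sw 1\delta(e\sw 2\ot_A c)$; applying the cointegral identity to $e\ot_A c$ rewrites this as $\phi(c)=\sum\lambda(c\sw 1)c\sw 2$ with $\lambda=\delta(e\ot_A-)$. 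Centrality of $e$ makes $\lambda$ an $(A,A)$-bimodule map and $\phi$ a right $\cC$-comodule endomorphism of $\cC$. Using the ring isomorphism $\Rend\cC\cC\cong\cC^*$, $\psi\mapsto\eC\circ\psi$ (whose inverse sends $\xi$ to $c\mapsto\sum\xi(c\sw 1)c\sw 2$), under which $\phi$ corresponds to $\lambda$, statement (2) becomes precisely the assertion that $\phi$ is a split monomorphism in $\M^\cC$.

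Next I would dispatch the two easy implications $(3)\Rightarrow(2)$ and $(1)\Rightarrow(3)$. The latter is immediate, as $\cC$ is a comodule. For the former, note that $\cC=A\ot_A\cC$ is cofree, so the defining adjunction gives $\Rhom\cC{N\ot_A\cC}{\cC}\cong(N\ot_A\cC)^*$ and identifies the cosmoothness map with $\gamma\mapsto\gamma\circ r_N$ on $(N\ot_A\cC)^*$. Taking $N=A$, this becomes right multiplication by $\lambda$ on $\cC^*$; its surjectivity forces the existence of $\gamma$ with $\gamma\lambda=\eC$, i.e.\ a left inverse of $\lambda$, which is (2).

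The substance of the proof is $(2)\Rightarrow(1)$, and this is the step I expect to be the main obstacle. The strategy is that every comodule $M$ is a colinear direct summand of the cofree comodule $M\ot_A\cC$ (coseparability splits the coaction $\roM$ by $\nu_M$), and cosmoothness passes to colinear summands by naturality of the precomposition maps; so it suffices to show that all cofree comodules are cosmooth, equivalently that $r_N=\id_N\ot_A\phi$ is a split monomorphism for every $N$. Here the real difficulty appears: the left inverse of $\phi$ produced by (2) is $L_\mu\colon c\mapsto\sum\mu(c\sw 1)c\sw 2$ with $\mu$ only \emph{right} $A$-linear, so $\id_N\ot_A L_\mu$ is not even well defined and cannot be tensored naively. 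Bridging this gap---manufacturing, for each $N$, a genuine colinear section of $r_N$ out of the one-sided inverse $\mu$---is exactly where coseparability (the cointegral $\delta$) and the centrality of $e$ must be brought to bear, and getting the left/right $A$-linearity bookkeeping correct is the crux of the whole argument.
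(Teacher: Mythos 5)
Your treatment of the two easy implications is sound and matches the paper: (1)$\Ra$(3) is immediate, and your (3)$\Ra$(2) via cofreeness of $\cC$ is the same computation the paper performs through the ring isomorphism $\cC^*\cong\Rend\cC\cC$, under which $\phi=r_A$ corresponds to $\lambda$ and precomposition by $r_A$ becomes right multiplication by $\lambda$. Your reformulation of (2) as the statement that $r_A$ is a split monomorphism in $\M^\cC$ is also exactly the paper's first step. (One simplification: your reduction of (2)$\Ra$(1) to cofree comodules via colinear summands is superfluous; once $r_N$ is split mono in $\M^\cC$ for every $N$, the map $g\mapsto g\circ r_N$ is surjective on $\Rhom\cC{N\ot_A\cC}M$ for \emph{every} comodule $M$ at once, simply by composing with the retraction.)

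The genuine problem is that your proof stops where it should begin: the implication (2)$\Ra$(1) is never established. You reduce it to producing, for each $N\in\M_A$, a retraction of $r_N=N\ot_A r_A$, you observe that the retraction $s_A$ of $r_A$ supplied by (2) is only right $A$-linear so that $N\ot_A s_A$ is ill-defined, and you then declare that bridging this gap is ``the crux'' --- without bridging it. A description of the obstacle is not a proof, so the proposal is incomplete at its central point. For comparison, the paper disposes of this step in one line: it sets $s_N=N\ot_A s_A$, concludes $s_N\circ r_N=\mathrm{id}$, and hence gets surjectivity of all the maps $\varphi_{M,N}$; in other words, the move you reject as ``not even well defined'' is precisely the move the paper makes, with no further justification. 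Your scruple is legitimate as stated: an arbitrary retraction $s_A\in\Rend\cC\cC$, equivalently an arbitrary left inverse $\xi$ of $\lambda$ in $\cC^*$, need not be left $A$-linear, and left $A$-linearity is exactly what $N\ot_A s_A$ requires (indeed, a \emph{natural} retraction of $r\colon u_*\to u_!$ amounts to a retraction of $r_A$ that is simultaneously left $A$-linear and right $\cC$-colinear). The issue evaporates when $\lambda$ is two-sidedly invertible in $\cC^*$, for then $r_A$ is a bijective $(A,A)$-bimodule map and its inverse is automatically a bimodule map; but for a merely one-sided inverse neither you nor the paper shows that it can be chosen left $A$-linear. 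So you have correctly located a real subtlety that the paper passes over in silence, but since you do not resolve it, your proposal does not prove (2)$\Ra$(1), and that is the implication that carries the proposition.
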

\begin{proof}
Note that $\cC^*$ can be identified with $\Rend \cC \cC$ via the map $\xi\mapsto (\xi\ot_A\cC)\circ\DC$ (with the inverse $f\mapsto \eC\circ f)$. Under this identification the product in $\cC^*$ coincides with the composition in $\Rend  \cC \cC$. Hence (2) is equivalent to saying that the map $r_A = (\lambda\ot_A\cC)\circ\DC$ has a retraction in $\M^\cC$. Denote this retraction by $s_A$. Note further that, since $\delta$ is a cointegral, the $r_N$ defined in the proof of Proposition~\ref{prop.cosep.smooth} can be written as $r_N =N\ot_Ar_A$. This implies that $s_A$ is a section of $r_A$ if and only if  $s_N = N\ot_A s_A$ is a retraction of $r_N = N\ot_A r_A$, for all right $A$-modules $N$. Finally observe that for all $M\in \M^\cC$ and $N\in \M_A$, the maps $\varphi_{M,N}:= \Rhom\cC {r_N} M$ come out explicitly as
$$
\varphi_{M,N} : \Rhom\cC{N\ot_A\cC}M \ni f\mapsto f\circ r_N\in  \Rhom\cC{N\ot_A\cC}M.
$$

(2) $\Ra$ (1)  The property  $s_N\circ r_N = N\ot_A\cC$, implies that, for all right $\cC$-comodules $M$ and right $A$-modules $N$, the maps $\varphi_{M,N}$ are surjective. Hence all right $\cC$-comodules are formally $\bX^\cC_{\delta, e}$-cosmooth.

The implication (1) $\Ra$ (3) is obvious.

(3) $\Ra$ (2) If $\cC$ is formally $\bX^\cC_{\delta, e}$-cosmooth, then $\varphi_{\cC,A}: \Rend \cC\cC\to \Rend \cC\cC$ is surjective. Hence there exists $s_A \in \Rend \cC\cC$ such that
$$
\cC = \varphi_{\cC, A}(s_A) = s_A\circ r_A.
$$
This completes the proof.
\end{proof}

A coseparable $A$-coring $\cC$ with a cointegral $\delta$ is said to be {\em Frobenius-coseparable} if there exists $e\in \cC^A$ such that, for all $c\in \cC$, $\delta(c\ot_A e) = \delta(e\ot_A c) = \eC(c)$. The  element $e$ is called a {\em Frobenius element}. In particular a Frobenius-coseparable coring is a Frobenius coring, see \cite[27.5]{BrzWis:cor}.

\begin{corollary}\label{cor.frob}
Let $\cC$ be a Frobenius-coseparable $A$-coring with cointegral $\delta$ and Frobenius element $e$. Then any right $\cC$-comodule is formally $\bX^\cC_{\delta, e}$-cosmooth and $\bX^\cC_{\delta, e}$-smooth.
\end{corollary}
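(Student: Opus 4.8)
The plan is to derive the Corollary directly from Proposition~\ref{prop.cosep.smooth} and Proposition~\ref{prop.cosep.cosmooth} by checking that the Frobenius condition forces the relevant maps to be identities. The key observation is that a Frobenius element $e$ is by definition an $A$-central element satisfying $\delta(e\ot_A c)=\eC(c)$ for all $c\in\cC$, so the map $\lambda:\cC\to A$, $c\mapsto\delta(e\ot_A c)$, is exactly the counit $\eC$. Since $\eC$ is the unit of the dual ring $\cC^*$, it trivially has a left inverse in $\cC^*$ (namely itself). Thus condition (2) of Proposition~\ref{prop.cosep.cosmooth} holds, and by the equivalence (2)$\Leftrightarrow$(1) every right $\cC$-comodule is formally $\bX^\cC_{\delta,e}$-cosmooth.

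For the smoothness half I would invoke Proposition~\ref{prop.cosep.smooth}, which reduces the question to showing that $\kappa_M:m\mapsto\sum m\sw 0\delta(e\ot_A m\sw 1)$ is a right $A$-linear section for every $M\in\M^\cC$. Using the other Frobenius relation $\delta(e\ot_A c)=\eC(c)$ together with the counit axiom $\sum m\sw 0\eC(m\sw 1)=m$ for the $\cC$-coaction, one computes
\[
\kappa_M(m)=\sum m\sw 0\,\delta(e\ot_A m\sw 1)=\sum m\sw 0\,\eC(m\sw 1)=m,
\]
so that $\kappa_M=\mathrm{id}_M$. The identity map is obviously a right $A$-linear section (its own left inverse), whence $M$ is formally $\bX^\cC_{\delta,e}$-smooth by Proposition~\ref{prop.cosep.smooth}. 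Combining the two halves gives the claim for all right $\cC$-comodules.

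The argument is essentially a matter of unwinding definitions; the only point requiring a little care is the identification of $\lambda$ with $\eC$, where one must confirm that being the unit of $\cC^*$ genuinely supplies the left inverse demanded in condition (2) rather than merely a right inverse. Since $\eC$ is a two-sided unit this causes no difficulty. I do not anticipate a genuine obstacle here, as both Propositions have already done the structural work of translating (co)smoothness into the splitting of $\kappa_M$ and the invertibility of $\lambda$; the Frobenius hypothesis is precisely calibrated so that both conditions degenerate to statements about the identity and the counit.
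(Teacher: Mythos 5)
Your proposal is correct and follows essentially the same route as the paper: identify $\kappa_M$ with the identity via the Frobenius relation and the counit axiom (so Proposition~\ref{prop.cosep.smooth} gives smoothness), and identify $\lambda$ with $\eC$, the unit of $\cC^*$, which has a left inverse (so Proposition~\ref{prop.cosep.cosmooth}(2) gives cosmoothness). The only difference is that you spell out the short computations that the paper states without detail.
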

\begin{proof}
The maps $\kappa_M$ in Proposition~\ref{prop.cosep.smooth} are all identity morphisms, hence they are sections and thus every right $\cC$-comodule is formally 
$\bX^\cC_{\delta, e}$-smooth. The map $\lambda$ in Proposition~\ref{prop.cosep.cosmooth} coincides with the counit $\eC$. Since $\eC$ is a unit in $\cC^*$, it has a left inverse, and thus every right $\cC$-comodule is formally 
$\bX^\cC_{\delta, e}$-cosmooth.
\end{proof}


\begin{thebibliography}{99}
\bibitem{Ard:sep}  A.\ Ardizzoni, \emph{Separable functors and formal
smoothness}, {\em J.\ K-Theory} in press (arXiv:math.QA/0407095).
\bibitem{ArdBrz:for} A.\ Ardizzoni, T.\ Brzezi\'nski and C.\ Menini, {\em Formally smooth bimodules}, {\em J.\ Pure Appl.\ Algebra} in press (arXiv:math.RA/0701473).
\bibitem{ArdMen:Hoc}A.\ Ardizzoni, C.\ Menini and D.\ \c{S}tefan,
\emph{Hochschild cohomology and `smoothness' in monoidal categories},
J.\ Pure Appl.\ Algebra {208} (2007), 297--330.
\bibitem{Brz:str} T.\ Brzezi\'nski,
{\it The structure of corings. Induction functors,
Maschke-type theorem, and Frobenius and Galois-type properties,}
Algebr.\ Represent.\ Theory, 5 (2002), 389--410.
\bibitem{BrzWis:cor} T.\ Brzezi\'nski and R.\ Wisbauer, {\em Corings and
 Comodules}, Cambridge University Press, Cambridge, 2003.
     \bibitem{CaeDeG:com} S.\ Caenepeel,
    E.\ De Groot and J.\ Vercruysse, {\em Galois theory for comatrix corings: Descent theory, Morita
    theory, Frobenius and separability properties},  
    Trans.\ Amer.\ Math.\ Soc.\ 359 (2007), 185--226.
    
    \bibitem{CaeMil:gen}  S.\ Caenepeel, G.\  Militaru and S.\  Zhu, 
        {\it Frobenius and Separable Functors for Generalized Hopf Modules 
        and Nonlinear Equations}, Springer, Berlin (2002)	
\bibitem{CunQui:alg} J.\ Cuntz and D.\ Quillen, {\em Algebra extensions
    and nonsingularity},
     J.\ Amer.\ Math.\ Soc.\ { 8} (1995), 251--289.
 \bibitem{ElKGom:com} L.\ El Kaoutit and  J.\ G\'omez-Torrecillas, {\em
 Comatrix corings: Galois corings, descent theory, and a structure
theorem for cosemisimple corings,}  Math.\ Z.\ 244 (2003), 887--906.
\bibitem{Guz:coi}
F.\ Guzman, {\em Cointegrations, relative cohomology for comodules and
coseparable corings,}
 J.\ Algebra 126 (1989), 211--224.  
 \bibitem{HilSta:cou} P.J.\ Hilton and U.\ Stammbach, \emph{A Course in Homological
Algebra}, Springer, New York, (1971).
\bibitem{KonRos:non} M.\ Kontsevich and A.\ Rosenberg, \emph{Noncommutative
spaces}, Preprint MPIM2004-35, 2004.
\bibitem{NasRai:mod}  C.\ N\u{a}st\u{a}sescu, S.\ Raianu and F.\
Van Oystaeyen, \emph{Modules graded by $G$-sets},
Math.\ Z.\ {203} (1990), 605--627.
\bibitem{NasBer:sep}  C.\ N\u{a}st\u{a}sescu, M.\ Van den Bergh and F.\
Van Oystaeyen, \emph{Separable functors applied to graded rings},
J.\ Algebra {123} (1989), 397--413.


\bibitem{Raf:sep}  M.D.\ Rafael, \emph{Separable functors revisited, }Comm.\
Algebra {18} (1990), 1445--1459.

\bibitem{Sch:smo} W.F.\ Schelter, \emph{Smooth algebras}, J.\ Algebra {103}
 (1986), 677--685.

\bibitem{Wis:fou} R.\ Wisbauer, \emph{Foundations of Module and Ring
Theory,}
Gordon and Breach, Philadelphia, PA, 1991.
 \bibitem{Wis:gal}
R.\ Wisbauer, {\em On Galois comodules,} Comm.\ Algebra 34 (2006), 2683--2711.

\bibitem{Zar:adj} M.\ Zarouali Darkaoui, {\em Adjoint and Frobenius Pairs of Functors, Equivalences and the Picard Group for Corings}, PhD Thesis, University of Granada, 2007.
\end{thebibliography}
\end{document}